\renewcommand{\eqref}[1]{\textup{(\ignorespaces\ref{#1}\unskip\@@italiccorr)}}
\def\maketag@@@#1{\hbox{\m@th\normalfont\bfseries#1}}
\numberwithin{equation}{section}
\numberwithin{figure}{section}
\theoremstyle{plain}
\newtheorem{thm}[equation]{Theorem}
\newtheorem{lemma}[equation]{Lemma}
\newtheorem{prop}[equation]{Proposition}
\theoremstyle{definition}
\newtheorem{example}[equation]{Example}
\newcommand{\Q}{\ensuremath \mathbb{Q}}
\newcommand{\R}{\ensuremath \mathbb{R}}
\newcommand{\C}{\ensuremath \mathbb{C}}
\newcommand{\Z}{\ensuremath \mathbb{Z}}
\begin{document}

\title[An elementary algebraic proof of the fundamental theorem of algebra]{An elementary algebraic proof of the fundamental theorem of algebra}

\author{Katelyn S.\ Clark$^{1}$}
\address{$^{1}$Department of Mathematics, Brigham Young University, Provo, UT 84602, USA}
\author{Pace P.\ Nielsen{$^{1,\ast}$}}
\address{$^{\ast}$ Corresponding author: \rm{pace@math.byu.edu}}

\keywords{field extension, fundamental theorem of algebra, polynomial factorization}
\subjclass[2020]{Primary 12D05, Secondary 12F05}

\begin{abstract}
We provide a new, elementary, algebraic proof of the fundamental theorem of algebra.  Furthermore, the method recovers a more general version of the theorem recently obtained by Joseph Shipman.  The key idea is to understand extension fields in which a polynomial gains a factor of a given degree.
\end{abstract}

\begingroup
\def\uppercasenonmath#1{} 
\let\MakeUppercase\relax 
\maketitle
\endgroup

\section{Introduction and history}

The fundamental theorem of algebra (FTA) asserts that $\C$ is an algebraic closure of $\R$ (and the only one, up to isomorphism).  There are many proofs of the FTA.  These proofs reveal interesting information about $\C$, and they employ tools from various mathematical fields ranging from topology and complex analysis to geometry and dynamical systems.

There are also algebraic proofs of the FTA.  One is due to Gauss, published in 1816 (see pages 33--56 of \cite{Gauss}).  This was Gauss's second proof of the FTA, but it is thought to be the very first proof of the FTA without any holes at the time of writing.  It proceeds by a clever induction over $2$-adic valuations.  Another algebraic proof is attributed to Emil Artin (see pages 615--617 of \cite{DF}).  It utilizes Galois theory and the existence of Sylow 2-subgroups.

In 2007 Joseph Shipman extracted the algebraic essence of the FTA and proved a more general statement about arbitrary algebraically closed fields (see \cite{Shipman}).  His proof relies on Galois theory and Sylow theorem combinatorics.

We provide a new algebraic proof of the FTA, with the method almost entirely belonging to the realm of field theory.  It was discovered independently of other algebraic proofs; however, one can find hints of those arguments in its structure.  It is elementary, requiring readers to understand only the part of ring theory used to force polynomials to have factors via long division, together with dimension counting arguments.  The proof recaptures Shipman's generalized FTA without the need for group theory.  It also introduces new facts about polynomial factorization over arbitrary fields.

\section{Squares and inequalities}\label{Section:2}

In this section we prove two well-known statements about $\R$ and $\C$, given as Lemma \ref{Lemma:QuadOverC} and Proposition \ref{Prop:OddDegree}.  Along the way, we make three new contributions, which are
\begin{itemize}
\item a construction of $\R$ from basic principles that should be more well-known, and which is simpler---in multiple ways---from many standard constructions,
\item a thorough explanation of how inequalities, positivity, and continuity over $\R$ are all encoded algebraic notions, leading to
\item a rebuttal of the common folklore that the FTA is a misnomer.
\end{itemize}
Those uninterested in the algebraization of those two well-known statements, or who are more interested in the new field-theoretic results, can safely skip to Section \ref{Section:Overview}.

We begin with a standard result about $\C$.

\begin{lemma}\label{Lemma:QuadOverC}
The field $\C$ is closed under square-roots, and thus has no quadratic extension.
\end{lemma}
\begin{proof}
Consider an arbitrary element $a+bi\in \C$, for some $a,b\in \R$.  If $b=0$, then the square-roots obviously lie in $\C$.  In the other case, the quantity $y:=(a+\sqrt{a^2+b^2})/2$ is positive and thus has a positive square-root in $\R$.  Taking $c:=\sqrt{y}$ and $d:=b/2c$, we quickly verify that $\pm (c+di)\in \C$ are the two square-roots of $a+bi$.

The second half of the lemma follows immediately from the quadratic formula.
\end{proof}

A key fact used in the previous proof is that the positive elements of $\R$ have square-roots in $\R$, which is an easy consequence of the intermediate value theorem (IVT).  Let us reprove the lemma without appealing to the IVT, instead using notions that are clearly algebraic.  It will be important to show that the usual ordering relation on $\R$ (and on some substructures) is expressible the language of algebra.

First, consider the (additive) semigroup freely generated by $\{1\}$, which we will denote by
\[
\Z_{+}:=\{1,2,3,\ldots\}.
\]
This semigroup is inductive.  Define multiplication on $\Z_{+}$ via iterated addition.  The usual ordering relation $<$ on $\Z_{+}$ is definable in the first-order language of semigroups; indeed, given $a,b\in \Z_{+}$, then $a< b$ is merely shorthand for the algebraic statement
\begin{equation}\label{Eq:Z+Ord}
\exists c:\ a+c=b.
\end{equation}
In this case, note that $b^2=(a+c)^2=a^2+(2ac+c^2)$; therefore, squaring is a monotone function on $\Z_{+}$.

Next, define $\Q_{+}$ in the usual way as the set of fractions over $\Z_{+}$ (where fractions are certain equivalence classes of ordered pair from $\Z_{+}$).  Addition and multiplication extend to $\Q_{+}$ in the usual way, as does the ordering relation.  Further, any inequalities over $\Q_{+}$ can be translated into the internal language of $(\Q_{+},+)$, again via \eqref{Eq:Z+Ord}.  The set $\Q_{+}$ also has a unary operation of (multiplicative) inversion, defined by flipping fractions.

There are many (isomorphic) ways to define $\R_{+}$ from $\Q_{+}$.  One of the easiest is to take the point of view that a positive real number is determined by the collection of strictly smaller positive rational numbers.  Thus, for example, $\sqrt{2}$ is determined by the initial segment
\[
a=\{x\in \Q_{+}\, :\, x<\sqrt{2}\}.
\]
Of course, to avoid circularity when actually defining $\sqrt{2}$, one should rewrite this initial segment as $a=\{x\in \Q_{+}\, :\, x^2<2\}$.

Guided by this viewpoint, define the set $\R_{+}$ as the collection of all initial segments of $\Q_{+}$ that are nonempty, proper, and have no greatest element.  In other words, define $a$ to be a positive real number if, and only if,
\[
a\in \mathscr{P}(\Q_+)\setminus\{\emptyset,\Q_+\}
\]
(where $\mathscr{P}$ is the powerset operation) with $a$ having no greatest element and with $a$ satisfying the initial segment property
\[
\forall x,y\in \Q_{+}:\ \text{if $x<y$ and $y\in a$, then $x\in a$}.
\]
We will call these initial segments (left) \emph{Dedekind cuts}.  The operations of addition, multiplication, and inversion all naturally extend to $\R_{+}$; if $a,b\in \R_{+}$, then
\begin{eqnarray*}
a+ b & := &\{x+y\in \Q_{+}\, :\, x\in a\text{ and }y\in b\},\\
a\cdot b & := & \{x\cdot y\in \Q_{+}\, :\, x\in a\text{ and }y\in b\}, \ \text{ and}\\
a^{-1} & := & \{x\in \Q_{+}\, :\, x^{-1}\notin a\}\ \ \text{(but removing its greatest element when it exists).}
\end{eqnarray*}
The definitions of the algebraic operations on $\R_{+}$ are significantly simpler than those for $\R$.  Without the presence of zero and negatives, many cases are avoided (which also simplifies proofs of their properties). The ordering relation on $\Q_{+}$ extends as the proper subset relation on Dedekind cuts, which orders $\R_{+}$.  Moreover, this order relation can be expressed in the language of $(\R_{+},+)$ by translating via \eqref{Eq:Z+Ord}.  Finally, the usual ordered ring axioms hold (except for those rules involving negation or zero); proofs are left to the interested reader.

We should point out that cancellativity of addition in $\R_{+}$ boils down to knowing that $\Q_{+}$ is archimedean under $<$, which is a second-order property.  Not every algebraic fact can be handled in a first-order way.

We are now ready to prove:

\begin{prop}\label{Prop:PositiveSquares}
Each element of $\R_{+}$ is a square.
\end{prop}
\begin{proof}
Let $a\in \R_{+}$ be arbitrary, and let
\[
b:=\{x\in \Q_{+}\, :\, x^2\in a\}.
\]
Showing that $b$ is a Dedekind cut is straightforward, since squaring is monotone by \eqref{Eq:Z+Ord}.  The inclusion $b^2\subseteq a$ is immediate.  It thus suffices to prove the reverse inclusion.

Let $q\in a$ be arbitrary.  Since $a$ has no greatest element, fix some $z\in \Q_{+}$, depending on $q$, such that $q+z\in a$.  Next, since $b$ is a proper initial segment, from the archimedean property on $\Q_{+}$ we may fix some $n\in \Z_{+}$ with $n\notin b$.  Taking $m\in \Z_{+}$ to be sufficiently large, we can guarantee that $3n/z < m$, again by the archimedean property.

A third use of the archimedean property tells us that there is some $x\in b$ with $x+1/m\notin b$.  Note that
\[
\left(x+\frac{1}{m}\right)^2=x^2+\frac{2mx+1}{m^2}<x^2+\frac{2mn+1}{m^2}\leq x^2+\frac{3mn}{m^2}<x^2+z.
\]
As $(x+1/m)^2\notin a$, we see that $x^2+z\notin a$, and hence $x^2+z>q+z$.  Consequently, $x^2>q$, and so, since $b^2$ is an initial segment of $\R_{+}$, we have $q\in b^2$, as desired.
\end{proof}

The proof of Proposition \ref{Prop:PositiveSquares} is algebraic, inasmuch as only standard concepts used in algebra appear in the proof, at least after rewriting all inequalities in terms of addition via \eqref{Eq:Z+Ord}.  The proof is \emph{additionally} analytic and topological.  The uses of the archimedean property are just an equivalent alternative to standard epsilonic continuity arguments.

Next, the field $\R$ is created from $\R_{+}$ via the usual Grothendieck group construction.  The algebraic operations on $\R_{+}$ extend to $\R$ in a piecewise manner, except that inversion is not defined on $0$.  The order relation extends as well.  It is impossible to express the order relation on $(\R,+)$ in the language of semigroups, but in light of Proposition \ref{Prop:PositiveSquares} we can express the order relation in the language of rings by saying that $a<b$ holds exactly when
\begin{equation}\label{Eq:RingOrder}
(a\neq b) \land (\exists c:\ a+c^2=b).
\end{equation}
Similar remarks holds for $\Z$ and $\Q$, after replacing the single existential variable in \eqref{Eq:RingOrder} with four existential variables, and replacing the square with a sum of four squares, by Lagrange's theorem.  Thus, while ostensibly working in the language of \emph{relational} algebra, all inequalities are disposable shorthands for simple algebraic statements (in each of the standard structures we care about).  In particular, continuity is an algebraic condition.

We will need one more important fact about $\R$, namely:

\begin{prop}\label{Prop:OddDegree}
Each real polynomial with odd degree has a real root.
\end{prop}
\begin{proof}
Let $f(x)\in \R[x]$ be a polynomial of odd degree $m\in \Z_{+}$.  Reduce to the case that $f(x)$ is monic and its constant term is nonzero (else zero is a root).  After replacing $f(x)$ by $-f(-x)$ if necessary, we may further assume that the constant term is negative.  (This is the only place in which we use the hypothesis that the degree is odd.)

Write $f(x)=f_2(x)-f_1(x)$, with $f_1(x),f_2(x)\in \R_{+}[x]$.  We claim that the set
\[
a:=\{q\in \Q_{+}\, :\, f_1(p)>f_2(p) \text{ for all $p\in \Q_{+}$ with $p\leq q$}\}
\]
will solve the equality $f(a)=0$, or equivalently $f_1(a)=f_2(a)$.

First, we need to verify that $a$ is a Dedekind cut.  Note that $a$ is clearly an initial segment of $\Q_{+}$, from its definition.  If $c\in \R_{+}$ is the maximum of the coefficients of $f_1(x)$, then for any rational (or real) number $r>\max(1,cm)$, we have
\[
f_1(r)\leq c(1+r+\cdots + r^{m-1})\leq cmr^{m-1}<r^m\leq f_2(r).
\]
Thus, $a$ is a proper subset of $\Q_+$ since it does not contain any such $r$.  (Equivalently, $f(x)$ is positive when evaluated at large values.)  A similar computation, using the reciprocal polynomial $f(0)^{-1}x^m f(1/x)$, shows that $a$ is nonempty.  Finally, if $a$ does have a greatest element, then simply remove it and now $a$ is a Dedekind cut.  (Regardless, the argument in the next paragraph will show, \emph{a posteriori}, that it did not have a greatest element after all.)

From the definition of $a$, there are rational numbers $q$ arbitrarily close to $a$ (from the left) for which $f(q)<0$.   From the continuity of polynomials, $f(a)\leq 0$.  Supposing, by way of contradiction, that $f(a)<0$, then again from the definition of $a$, there are rational numbers $q$ arbitrarily close to $a$ (from the right) for which $f(q)\geq 0$, contradicting continuity.
\end{proof}

Note that Proposition \ref{Prop:OddDegree} would also follow from the IVT.  Some have used the existence of such citations of the IVT as evidence that the FTA has no purely algebraic proof. An even stronger assertion appears in \cite{BL}:
\begin{quote}
It is a truism that the theorem is not really a theorem of algebra but of analysis or topology.
\end{quote}
Is the fundamental theorem of algebra truly innocent of the crime of being a ``theorem of algebra''?  Or is the quotation implicity presenting a false dichotomy, and the FTA is guilty of being a theorem of algebra while also being a theorem of analysis and topology?

Continuity of real polynomials is used in a central way to prove Proposition \ref{Prop:OddDegree}, but that type of continuity (and its very easy, inductive proof) immediately translates into the language of ring theory via \eqref{Eq:RingOrder}.  The careful reader might perhaps exclude the FTA from being purely algebraic because the construction of $\R_{+}$ used set-theoretic machinery, or because some important facts require the number-theoretic inductive nature of $\Z_{+}$, or because we appealed to second-order logic, but those are all standard resources.  We trust that readers will now reach a verdict of {\it guilty}.

\section{Strategy overview}\label{Section:Overview}

Let $F$ be a field, and let $K/F$ be an algebraic field extension.  When someone asserts that $K$ is an \emph{algebraic closure} of $F$, it usually means one of the following two conditions:
\begin{itemize}
\item there is no algebraic extension of $K$ (i.e., $K$ is algebraically closed) or
\item every nonconstant polynomial over $F$ splits into linear factors over $K$.
\end{itemize}
These conditions are well known to be equivalent (see \cite[p.\ 543]{DF}).

To demonstrate that $\C$ is an algebraic closure of $\R$, we will verify the second bullet point.  Thus, it suffices to check two facts: every irreducible polynomial over $\R$ is linear or quadratic and every quadratic polynomial over $\R$ has roots in $\C$.  We already verified the second fact via Lemma \ref{Lemma:QuadOverC}, and so only the first fact remains.

To finish our overview, we will describe how to handle polynomials with small degree.  There is nothing to show for degrees $1$ and $2$.  By Proposition \ref{Prop:OddDegree}, there are no irreducible polynomials of degree $3$, as expected.  It is at degree $4$ where things first become difficult.  Ostensibly, no irreducible polynomial of degree $4$ should exist.  If a degree four polynomial has no real root, it should have two irreducible quadratic factors.

This motivates a more general question: Given a polynomial $f(x)$ over a field $F$, with $\deg(f)=4$, when can we guarantee that $f(x)$ has a quadratic factor?  In the next section, we will prove that over \emph{any field} $F$, there is an extension field $K/F$ of degree $1$, $2$, $3$, $5$, or $6$ where $f(x)$ has a quadratic factor.

Consider the special case when $F=\R$.  We expect $[K:\R]=1$ because we expect that $f(x)$ already has a quadratic factor over $\R$ (even if there are no roots over $\R$).  Thus, our goal is to rule out the other four possibilities.  Proposition \ref{Prop:OddDegree} disallows the cases $[K:\R]=3,5$.  If $[K:\R]=2$, then $K=\C$ (up to isomorphism), and so $f(x)$ would have a complex quadratic factor $g(x)\in \C[x]$.  Then, by Lemma \ref{Lemma:QuadOverC}, the polynomial $g(x)$ has a root $r\in \C$.  Consequently, $f(x)\in \R[x]$ is not irreducible, because the minimal polynomial of $r$ over $\R$ divides $f(x)$.

All that remains is to rule out the case where $[K:\R]=6$.  At first blush, it appears that we have reached an impasse, for when trying to rule out the existence of an irreducible degree $4$ polynomial, we must now handle the larger degree $6$.  Fortunately, we have made some progress, because the $2$-adic valuation of $6$ is smaller than that of $4$.  Showing that there are no degree $6$ extensions of $\R$ turns out to be almost trivial.

Ultimately, we will proceed by induction not on the degree of $f(x)$, but rather on the $2$-adic valuation of the degree.

\section{Forcing factors of a given degree}\label{Section:4}

Let $F$ be a field, and let $f(x)\in F[x]$ be a nonconstant polynomial of degree $n$.  It is well known that if $f(x)$ is irreducible over $F$, then there is an extension field $K/F$ of degree $n$ where $f(x)$ has a linear factor over $K$.  If we drop the irreducibility hypothesis, we can instead take $[K:F]$ to be the degree of any irreducible factor of $f(x)$ over $F$.

What if, instead of a linear factor, we want to pass to an extension field where $f(x)$ has a quadratic factor (or a factor of a larger degree)?  We can answer this question by using mostly the same ideas as in the linear case.  To keep things general, instead of working only over fields, we work with rings.  All rings in this article are associative, unital, and commutative.

Let $R$ be a ring, and let $f(x)\in R[x]$ be a nonconstant polynomial.  We are interested in generically forcing $f(x)$ to have a monic divisor of a given fixed degree $k\geq 0$.  To that end, let $b_0,\ldots, b_{k-1}$ be independent polynomial indeterminates over $R$, and take
\[
g(x):=x^k+b_{k-1}x^{k-1}+\cdots + b_0\in (R[b_0,\ldots, b_{k-1}])[x].
\]
Applying the usual division algorithm, we have
\begin{equation}\label{Eq:Divide1}
f(x)=q(x)g(x) + r(x)
\end{equation}
for some unique quotient and remainder $q(x),r(x)\in (R[b_0,\ldots, b_{k-1}])[x]$ where
\begin{equation}\label{Eq:Divide2}
\text{either $r(x)=0$ or $\deg(r(x))<\deg(g(x))$.}
\end{equation}
This process works without any Euclidean domain assumption on $R$, because the leading coefficient of $g(x)$ is a unit.  Furthermore, if $\deg(f(x))\geq \deg(g(x))$ and if $f(x)$ is monic, then the quotient $q(x)$ is monic.

Note that $g(x)\mid f(x)$ if and only if $r(x)=0$.  Let $I$ be the ideal of $R[b_0,\ldots, b_{k-1}]$ generated by the coefficients of $r(x)$, and put
\begin{equation}\label{Eq:DefineRfk}
R_{f,k}:=R[b_0,\ldots, b_{k-1}]/I.
\end{equation}
This is the generic coefficient ring over which (the image of) $f(x)$ is forced to have a monic degree $k$ factor.  The ring $R_{f,k}$ has another useful property that we now describe.

Let $\varphi\colon R\to R'$ be a (unital) ring homomorphism from $R$ to some ring $R'$.   This gives $R'$ the structure of an $R$-algebra.  (We do not assume that $\varphi$ is injective, although this will automatically be the case if $R$ is a field and $R'$ is not the zero ring.)  The map $\varphi$ naturally extends to a ring homomorphism $R[x]\to R'[x]$ given by the rule $\sum_{i=0}^{m}r_i x^i\mapsto \sum_{i=0}^{m}\varphi(r_i)x^i$, which we will also call $\varphi$.

Write $f'(x):=\varphi(f(x))$, and assume $f'(x)$ has a monic factor $g'(x)\in R'[x]$ of degree $k$.  We then can extend $\varphi\colon R\to R'$ to a new ring homomorphism $\psi\colon R[b_0,\ldots, b_k]\to R'$ by sending $b_i$ to the $i$th coefficient of $g'(x)$ and then extending in the obvious way.  Thus, letting $\iota \colon R\to R[b_0,\ldots,b_{k-1}]$ be the natural inclusion map, then $\varphi=\psi\circ \iota$.

Just as for $\varphi$, the map $\psi$ likewise extends to polynomial rings, and we have $\psi(g(x))=g'(x)$.  Moreover, because $g'(x)\mid f'(x)$, we must have $\psi(r(x))=0$.  Thus, $\psi$ factors through the natural projection map $R[b_0,\ldots, b_{k-1}]\to R_{f,k}$, and hence so does $\varphi$.  Informally, think of $R_{f,k}$ as the universal coefficient ring where $f(x)$ is forced to have a monic factor of degree $k$.

To aid in understanding, let us consider this construction in the special case when $k=1$.  What does the universal adjunction of a monic linear factor look like?  Write
\[
f(x)=\sum_{i=0}^{n}a_ix^i\in R[x].
\]
Instead of taking $g(x)=x+b_0$, write $g(x)=x-\alpha$, as this will help simplify computations.  Using the long division process to divide $f(x)$ by $x-\alpha$ in the ring $(R[\alpha])[x]$, we find that the quotient is
\[
q(x) = a_nx^{n-1} + (a_{n-1}+a_n\alpha)x^{n-2} + \cdots + (a_1+a_2\alpha + \cdots + a_{n}\alpha^{n-1})\in (R[\alpha])[x],
\]
and that the remainder is $f(\alpha)\in R[\alpha]$.  Thus, up to the change of variables $b_0= -\alpha$, we have
\[
R_{f,1}=R[\alpha]/\langle f(\alpha)\rangle.
\]
When $R$ is a field and $f(x)$ is an irreducible polynomial, readers will recognize this ring as the usual field extension of $R$ where a root of $f(x)$ has been adjoined.  The following proposition describes an important (and well-known) structural fact regarding this ring.

\begin{prop}\label{Prop:MonicfDegree1}
Let $R$ be a nonzero ring.  If $f(x)\in R[x]$ is monic, then $R_{f,1}$ is a free  $R$-module of rank $n:=\deg(f)$.
\end{prop}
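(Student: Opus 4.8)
The plan is to produce an explicit $R$-basis of $R_{f,1}$, namely the images of $1,\alpha,\dots,\alpha^{n-1}$ under the quotient map $\pi\colon R[\alpha]\to R[\alpha]/\langle f(\alpha)\rangle=R_{f,1}$. Let $M\subseteq R[\alpha]$ be the set consisting of $0$ together with all polynomials in $\alpha$ of degree less than $n$. Since $R[\alpha]$ is the polynomial ring in the single indeterminate $\alpha$, the subset $M$ is an $R$-submodule that is free with basis $1,\alpha,\dots,\alpha^{n-1}$. It therefore suffices to prove that the restriction $\pi|_{M}\colon M\to R_{f,1}$ is an isomorphism of $R$-modules.

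For surjectivity I would invoke the long division process of Section~\ref{Section:LongDivision}, applied with the role of the variable played by $\alpha$ and with divisor the monic polynomial $f(\alpha)$ of degree $n$. Given any $h(\alpha)\in R[\alpha]$, equations \eqref{Eq:Divide1} and \eqref{Eq:Divide2} produce $q(\alpha),r(\alpha)\in R[\alpha]$ with $h(\alpha)=q(\alpha)f(\alpha)+r(\alpha)$ and $r(\alpha)\in M$, whence $\pi(h(\alpha))=\pi(r(\alpha))$. So $\pi|_{M}$ is onto.

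For injectivity, suppose $r(\alpha)\in M$ lies in the kernel, so that $r(\alpha)=q(\alpha)f(\alpha)$ for some $q(\alpha)\in R[\alpha]$. Here is where monicity does the work, via a degree comparison: if $q(\alpha)\neq 0$, then because the leading coefficient of $f$ is the unit $1$, the product $q(\alpha)f(\alpha)$ is nonzero with $\deg(q(\alpha)f(\alpha))=\deg(q(\alpha))+n\geq n$, contradicting $r(\alpha)\in M$. (Alternatively, one may cite the uniqueness half of long division: both $r=q f+0$ and $r=0\cdot f+r$ exhibit $r$ in the form \eqref{Eq:Divide1}--\eqref{Eq:Divide2} upon dividing by $f$, forcing $q=0$.) Hence $q(\alpha)=0$ and $r(\alpha)=0$, so $\pi|_{M}$ is injective. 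Combining the two steps, $\pi|_{M}$ is an $R$-module isomorphism from the free rank-$n$ module $M$ onto $R_{f,1}$, which is the claim. I do not expect a real obstacle here; the one point requiring care is that $R$ is allowed to have zero divisors, which is precisely why the argument must rest on $f$ being monic---so that both the division step and the degree bound on $q(\alpha)f(\alpha)$ stay valid---rather than on any integral-domain hypothesis. The assumption that $R$ is nonzero is used only to ensure that ``rank $n$'' is meaningful.
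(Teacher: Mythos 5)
Your proposal is correct and follows the same route as the paper: the paper's proof also identifies $R_{f,1}$ with the span of the images of $1,\alpha,\dots,\alpha^{n-1}$, using the existence half of long division by the monic $f(\alpha)$ for spanning and the uniqueness half for linear independence. You have merely expanded the uniqueness step into an explicit degree comparison (valid over any ring precisely because $f$ is monic), which the paper leaves implicit.
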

\begin{proof}
Given any element $h(\alpha)\in R[\alpha]$, then dividing by $f(\alpha)$ using long division yields a unique quotient and a unique remainder $q_h(\alpha),r_h(\alpha)\in R[\alpha]$ with
\[
h(\alpha)=q_h(\alpha)f(\alpha)+r_h(\alpha)
\]
and either $r_h(\alpha)=0$ or its degree is smaller than $n$.  Thus, in the factor ring $R_{f,1}$, cosets are uniquely represented by $R$-linear combinations of the images of $1,\alpha,\ldots, \alpha^{n-1}$.
\end{proof}

We are mainly interested in the case when $R$ is an $F$-algebra for some field $F$.  From Proposition \ref{Prop:MonicfDegree1},
\begin{equation}\label{Eq:DimensionFactorDegree1}
\dim_F(R_{f,1})=n\cdot \dim_F(R).
\end{equation}
In particular, when $R$ is finite-dimensional, then so is $R_{f,1}$.  Generalizing, we obtain the following fact about $R_{f,k}$, for any integer $k\geq 0$.

\begin{thm}\label{Thm:DimensionFactor}
Let $F$ be a field, let $R$ be a nonzero $F$-algebra, and let $f(x)\in R[x]$ be a monic polynomial of degree $n$.  For any integer $k\geq 0$,
\[
\dim_{F}(R_{f,k})=\binom{n}{k}\cdot \dim_F(R).
\]
\end{thm}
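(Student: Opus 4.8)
The plan is to induct on $k$, keeping $R$ (a nonzero $F$-algebra) and $f$ arbitrary. The base case $k=0$ is immediate: then $g(x)=1$, the remainder of $f(x)$ upon division by $1$ is $0$, so $R_{f,0}=R$, while $\binom{n}{0}=1$. For the inductive step the idea is to compare two towers that each build ``a monic degree-$k$ factor of $f$ together with a root of that factor'', but in the two possible orders.

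First I would form, over $R_{f,k}$, the ring $(R_{f,k})_{g,1}$ obtained by adjoining a root of the universal monic degree-$k$ factor $g(x)\in R_{f,k}[x]$ of $f(x)$. By Proposition~\ref{Prop:MonicfDegree1} applied with base ring $R_{f,k}$ and polynomial $g$, this is a free $R_{f,k}$-module of rank $k$, so $\dim_F\big((R_{f,k})_{g,1}\big)=k\cdot\dim_F(R_{f,k})$. Going the other way, I would first adjoin a root of $f$, forming $R_{f,1}=R[\alpha]/\langle f(\alpha)\rangle$; over this ring $f(x)=(x-\alpha)q(x)$ with $q(x)$ monic of degree $n-1$ (the quotient computed explicitly in the $k=1$ discussion), and then I would form $(R_{f,1})_{q,k-1}$, the universal adjunction of a monic degree-$(k-1)$ factor of $q$. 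By the inductive hypothesis applied to the (nonzero, when $n\geq 1$) $F$-algebra $R_{f,1}$ and the polynomial $q$, together with \eqref{Eq:DimensionFactorDegree1}, one gets $\dim_F\big((R_{f,1})_{q,k-1}\big)=\binom{n-1}{k-1}\dim_F(R_{f,1})=n\binom{n-1}{k-1}\dim_F(R)=k\binom{n}{k}\dim_F(R)$.

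The crux is an $R$-algebra isomorphism $(R_{f,k})_{g,1}\cong(R_{f,1})_{q,k-1}$, which I would prove by Yoneda. For any $R$-algebra $S$, unwinding the universal properties (using Proposition~\ref{Prop:MonicfDegree1} for the root adjunctions) identifies the $R$-algebra maps $(R_{f,k})_{g,1}\to S$ with pairs $(G,\beta)$, where $G\in S[x]$ is a monic degree-$k$ divisor of $f$ and $\beta\in S$ is a root of $G$, and identifies the $R$-algebra maps $(R_{f,1})_{q,k-1}\to S$ with pairs $(\beta,g_1)$, where $\beta\in S$ is a root of $f$ and $g_1\in S[x]$ is a monic degree-$(k-1)$ divisor of $f(x)/(x-\beta)$. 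Sending $(G,\beta)$ to $\big(\beta,\,G(x)/(x-\beta)\big)$ and $(\beta,g_1)$ to $\big((x-\beta)g_1(x),\,\beta\big)$ should give mutually inverse natural bijections: if $G(\beta)=0$ then long division of the monic $G$ by $x-\beta$ has zero remainder and monic quotient of degree $k-1$, and $G\mid f$ forces $G(x)/(x-\beta)\mid f(x)/(x-\beta)$ by uniqueness of the quotient; the reverse direction is similar. Granting this, the two dimension computations give $k\cdot\dim_F(R_{f,k})=k\binom{n}{k}\dim_F(R)$, and dividing by $k\geq 1$ closes the induction.

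I expect the main obstacle to be making the Yoneda step airtight — checking that the described correspondence of ``structures'' really is a bijection natural in $S$, which again reduces to the uniqueness in long division (that $(x-\beta)\mid f$ makes the cofactor $f(x)/(x-\beta)$ canonical, and that dividing a monic polynomial vanishing at $\beta$ by $x-\beta$ drops the degree by exactly one, with the quotient's coefficients polynomial in the data). The remaining care is bookkeeping in degenerate ranges: when $k>n$ one checks directly that $R_{f,k}=0$ (a monic factor of degree exceeding $\deg f$ forces the zero ring), matching $\binom{n}{k}=0$; and when $\dim_F(R)$ is infinite, ``dividing by $k$'' should be replaced by noting that the chain $R\hookrightarrow R_{f,1}\hookrightarrow(R_{f,1})_{q,k-1}\cong(R_{f,k})_{g,1}$ factors through $R_{f,k}$, so $R$ embeds in $R_{f,k}$ and $\dim_F(R_{f,k})$ is infinite as well.
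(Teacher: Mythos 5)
Your proposal is correct, and it shares the paper's basic strategy---compute the $F$-dimension of one ring built by two different iterated universal adjunctions, use Proposition~\ref{Prop:MonicfDegree1} to supply the factor of $k$, and cancel---but the decomposition is genuinely different. The paper compares $S_1=(R_{f,k})_{g,1}$ with $S_2=(R_{f,k-1})_{h',1}$, where $h'$ is the cofactor of the generic degree-$(k-1)$ factor; this keeps $R$ and $f$ fixed throughout the induction and runs on the identity $k\binom{n}{k}=(n-k+1)\binom{n}{k-1}$. You instead compare $(R_{f,k})_{g,1}$ with $(R_{f,1})_{q,k-1}$, peeling off a root of $f$ first; your induction therefore changes both the base ring (to $R_{f,1}$) and the polynomial (to the degree-$(n-1)$ cofactor $q$), runs on $k\binom{n}{k}=n\binom{n-1}{k-1}$, and makes essential use of the fact that the theorem is stated for arbitrary nonzero $F$-algebras $R$ rather than just for fields. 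The second difference is in how the two towers are matched: the paper extracts mutual surjections $S_1\to S_2$ and $S_2\to S_1$ from the ``same information'' observation, which already equates dimensions, whereas you prove an honest isomorphism by identifying functors of points. That is cleaner and more robust, at the cost of having to verify carefully that $R_{f,k}$ represents the functor of monic degree-$k$ divisors of $f$ and that your correspondence $(G,\beta)\leftrightarrow(\beta,G(x)/(x-\beta))$ is natural---which, as you say, reduces to uniqueness of quotients in long division. Two small points of bookkeeping: your handling of $k>n$ (the ideal $I$ then contains the leading coefficient $1$ of the remainder $f$, so $R_{f,k}=0$) matches the paper's remark following the theorem; and in the infinite-dimensional case the inclusion $R_{f,1}\hookrightarrow(R_{f,1})_{q,k-1}$ is not immediate from the inductive hypothesis (which gives only a dimension, not freeness), but you do not actually need it, since the identity $k\cdot\dim_F(R_{f,k})=k\binom{n}{k}\dim_F(R)$ determines $\dim_F(R_{f,k})$ by cardinal arithmetic even when the right side is infinite.
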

\begin{proof}
One can easily check that $R_{f,0}\cong R$, so the needed equality holds when $k=0$.  Now consider when $k\geq 1$, and inductively assume that the theorem is true for integers smaller than $k$.  Working over $R_{f,k}$, let $g(x)$ be the (image of the generic) degree $k$ monic polynomial forced to divide the image of $f(x)$.  Letting $S_1:=(R_{f,k})_{g,1}$, then \eqref{Eq:DimensionFactorDegree1} yields
\begin{equation}\label{Eq:S1}
\dim_F(S_1)=k\cdot \dim_{F}(R_{f,k}).
\end{equation}
On the other hand, working over $R_{f,k-1}$, write the image of $f(x)$ as $g'(x)h'(x)$ where $g'(x)$ is the (image of the generic) degree $k-1$ monic divisor.  Then let $S_2:=(R_{f,k-1})_{h',1}$.  By the inductive assumption together with \eqref{Eq:DimensionFactorDegree1}, noting $\deg(h'(x))=n-k+1$, we have
\begin{equation}\label{Eq:S2}
\dim_{F}(S_2)= (n-k+1)\cdot \binom{n}{k-1}\cdot \dim_{F}(R).
\end{equation}

Given a monic polynomial $f(x)$ over any nonzero ring, the following two options give exactly the same information:  (A) specifying a monic factorization $f(x)=g'(x)h'(x)$ where $\deg(g'(x))=k-1$ together with specifying a monic linear factor of $h'(x)$ and (B) specifying a monic factorization $f(x)=g(x)h(x)$ where $\deg(g(x))=k$ together with specifying a monic linear factor of $g(x)$.  Thus, from the universal natures of $S_1$ and $S_2$, there are surjective ring homomorphisms $S_1\to S_2$ and $S_2\to S_1$.  In particular, they have the same $F$-dimension.  Thus, combining \eqref{Eq:S1} with \eqref{Eq:S2} yields the claimed equality.
\end{proof}

When $k>\deg(f(x))$, then $R_{f,k}$ is the zero ring, and so the formula in Theorem \ref{Thm:DimensionFactor} still holds true.  The previous proof also continues to work, essentially vacuously.

Given an integer prime $p$, let $\nu_p$ denote the $p$-adic valuation map.  (In other words $\nu_p(n)$ is the largest integer exponent $k$ such that $p^k\mid n$.)  The final technical result needed to prove the FTA capitalizes on Theorem \ref{Thm:DimensionFactor}, allowing us to describe an interesting fact that is true for arbitrary fields.

\begin{thm}\label{Thm:2adic}
Let $F$ be a field, and let $f(x)\in F[x]\setminus\{0\}$.  If $p$ is an integer prime dividing $n:=\deg(f(x))$, then there exists an algebraic field extension $K/F$ where $f(x)$ has a factor of degree $p$ in $K[x]$, with $[K:F]\leq \binom{n}{p}$ and $\nu_p([K:F])<\nu_p(n)$.
\end{thm}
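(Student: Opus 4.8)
The plan is to realize $K$ as a residue field of the universal ring $R_{f,p}$ from Section~\ref{Section:LongDivision}, with the $F$-dimension of $R_{f,p}$ doing essentially all of the work. Since multiplying $f$ by a nonzero constant changes neither $\deg(f)$ nor the degrees of its factors, I would first reduce to the case that $f$ is monic and nonconstant; then $p\mid n$ forces $n\geq p$. By Theorem~\ref{Thm:DimensionFactor} applied with $R=F$, the ring $R_{f,p}$ is a nonzero $F$-algebra of finite dimension $\binom{n}{p}$.

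The first substantive step is the elementary valuation identity $\nu_p\bigl(\binom{n}{p}\bigr)=\nu_p(n)-1$. It follows from $\binom{n}{p}=\tfrac{n}{p}\binom{n-1}{p-1}$: the factor $n/p$ contributes $\nu_p(n)-1$, while $\binom{n-1}{p-1}=\prod_{j=1}^{p-1}\tfrac{n-j}{j}$ is a $p$-adic unit, because $p\mid n$ makes every numerator satisfy $n-j\equiv-j\not\equiv0\pmod p$.

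Next I would exploit the structure of $R_{f,p}$. As a nonzero finite-dimensional commutative $F$-algebra it is Artinian, so by the Chinese remainder theorem (just as in the discussion following \eqref{Eq:DimensionFactorDegree1}) it decomposes as a finite product $R_{f,p}\cong\prod_i A_i$ of nonzero local finite-dimensional $F$-algebras. Since $\sum_i\dim_F(A_i)=\binom{n}{p}$ while $\nu_p\bigl(\binom{n}{p}\bigr)=\nu_p(n)-1$, the integers $\dim_F(A_i)$ cannot all be divisible by $p^{\nu_p(n)}$; I would fix an index $i_0$ with $\nu_p(\dim_F A_{i_0})\leq\nu_p(n)-1$. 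Let $\mathfrak m$ be the maximal ideal of $A_{i_0}$ and set $K:=A_{i_0}/\mathfrak m$, a finite (hence algebraic) field extension of $F$. Because $A_{i_0}$ has finite length over itself and its only simple module, up to isomorphism, is $K$, additivity of $\dim_F$ along a composition series gives $\dim_F(A_{i_0})=\ell\cdot[K:F]$ for some integer $\ell\geq1$; hence $[K:F]\mid\dim_F(A_{i_0})$, and therefore $\nu_p([K:F])\leq\nu_p(\dim_F A_{i_0})\leq\nu_p(n)-1<\nu_p(n)$.

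It remains to confirm that $f$ actually acquires a degree-$p$ factor over this $K$. Composing the projections $R_{f,p}\twoheadrightarrow A_{i_0}\twoheadrightarrow K$ produces an $F$-algebra homomorphism $R_{f,p}\to K$, and by the very construction of $R_{f,p}$ the image in $K[x]$ of the generic monic degree-$p$ polynomial that was adjoined is a genuine monic divisor of $f(x)$ of degree $p$. I expect the one delicate point to be the divisibility $[K:F]\mid\dim_F(A_{i_0})$: it is precisely this---proved via a composition series, or equivalently via the $\mathfrak m$-adic filtration whose graded pieces are $K$-vector spaces---that converts the crude equality $\dim_F(R_{f,p})=\binom{n}{p}$ into a usable bound on $\nu_p([K:F])$. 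The remaining ingredients, namely the valuation identity and the universal property of $R_{f,p}$, should be routine.
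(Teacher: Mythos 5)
Your proposal is correct and follows essentially the same route as the paper's own proof: pass to the universal ring $F_{f,p}$, compute $\dim_F(F_{f,p})=\binom{n}{p}$ via Theorem \ref{Thm:DimensionFactor}, use $\nu_p\bigl(\binom{n}{p}\bigr)<\nu_p(n)$, decompose into local factors, and take a residue field of a factor whose dimension has small $p$-adic valuation. The only difference is that you carefully justify the steps the paper leaves implicit (the exact valuation identity, the choice of local factor, and the composition-series argument that $[K:F]$ divides $\dim_F(A_{i_0})$), all of which are correct.
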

\begin{proof}
Without loss of generality, assume $f(x)$ is monic.  According to Theorem \ref{Thm:DimensionFactor}, the ring $F_{f,p}$ has $F$-dimension $\binom{n}{p}$.  As $n$ is divisible by $p$,
\[
\nu_p\left(\binom{n}{p}\right)=\nu_p\left(\frac{n(n-1)(n-2)\cdots (n-p+1)}{p!} \right) = \nu_p(n)-1 < \nu_{p}(n).
\]
This is the key inequality.

Recall that a commutative ring $A$ is \emph{local} when it has a unique maximal ideal $\mathfrak{m}$.  Now $F_{f,p}$, being finite-dimensional as an $F$-vector space, is a finite direct product of local rings.  (For several different proofs, see the answers at \cite{Stack} or see Theorem 3(4) on page 752 in \cite{DF}.)  At least one of those local rings $(A,\mathfrak{m})$ has $F$-dimension with smaller $p$-adic valuation than $n$, because the sum of all their dimensions equals $\binom{n}{p}$.  The ring $A$ has a (finite) filtration by the powers of $\mathfrak{m}$, producing the associated graded ring
\[
(A/\mathfrak{m})\oplus (\mathfrak{m}/\mathfrak{m}^2)\oplus \cdots.
\]
The $F$-dimension of $A$ equals the sum of the $F$-dimensions of these direct summands.

On the other hand, each summand of the graded ring is a module over the residue field $A/\mathfrak{m}$.  Therefore, the $F$-dimension of the residue field divides the $F$-dimension of $A$, and hence has a smaller $p$-adic valuation than $n$.  Taking $K=A/\mathfrak{m}$ we are done.
\end{proof}

Taking $n=4$ and $p=2$, we find that $[K:F]$ is limited to the values claimed in the outline given in Section \ref{Section:Overview}.

\section{Proving the fundamental theorem of algebra}\label{Section:5}

A strong version of the FTA can be expressed as follows:

\begin{thm}\label{Thm:FullFTA}
If a field $F$ satisfies the two conditions
\begin{itemize}
\item[$(1)$] $F(\sqrt{-1})$ has no quadratic extension and
\item[$(2)$] each nonlinear polynomial of odd degree is reducible in $F[x]$,
\end{itemize}
then $F(\sqrt{-1})$ is algebraically closed.
\end{thm}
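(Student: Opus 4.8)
The aim is to show that $E:=F(\sqrt{-1})$ has no proper finite extension, hence is algebraically closed. First I would reduce to polynomials over $F$. The extension $E/F$ has degree at most $2$: it is $F$ itself when $\sqrt{-1}\in F$, and otherwise it is quadratic and carries the $F$-automorphism $\sigma$ with $\sigma(\sqrt{-1})=-\sqrt{-1}$. So for any $f(x)\in E[x]$ the product $f(x)f^{\sigma}(x)$ lies in $F[x]$, and a root of it in $E$ yields, after possibly applying $\sigma$, a root of $f(x)$ in $E$. Thus it suffices to produce a root in $E$ for every monic $f(x)\in F[x]$. (When $\operatorname{char}F=2$ one has $E=F$ and hypothesis~(1) only asserts that $F$ is perfect; I would treat that degenerate case separately, and in odd characteristic I suppress routine separability bookkeeping.)

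The heart of the matter is a \emph{key lemma}: hypothesis~(2) forces every finite extension of $F$ --- and hence, since $[E:F]\le 2$, every finite extension of $E$ --- to have degree a power of $2$. To prove it, suppose $F$ admits an irreducible polynomial of degree divisible by some odd prime $p$, and choose such a $g$ with $\nu_p(\deg g)$ as small as possible, say equal to $e\ge 1$. By Theorem~\ref{Thm:2adic} there is a finite extension $K/F$ over which $g$ acquires a monic factor of degree $p$, with $\nu_p([K:F])<e$; the minimality of $e$ then forces $p\nmid[K:F]$. Letting $\beta$ be a root of that degree-$p$ factor, we have $[K(\beta):K]\in\{1,p\}$ while $\deg g=[F(\beta):F]$ divides $[K(\beta):K]\cdot[K:F]$; comparing $p$-adic valuations rules out $[K(\beta):K]=1$ and pins down $e=1$, so the degree-$p$ factor is irreducible over $K$. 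This leaves an irreducible polynomial of odd prime degree $p$ over $K$ with $p\nmid[K:F]$, and the remaining task is to push this situation back down to $F$ to contradict~(2). That last step is the part I expect to be genuinely delicate: Theorem~\ref{Thm:2adic} controls only the $p$-part of $[K:F]$, whereas~(2) is hypothesized over $F$ alone, so one needs an extra device --- a norm argument, or a finer induction on the prime-to-$p$ part of $[K:F]$ --- to produce a bona fide odd-prime-degree polynomial over $F$ itself.

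Granting the key lemma, the endgame is clean. If $E$ were not algebraically closed, pick a proper finite extension $L/E$ of least degree $n:=[L:E]\ge 2$; by minimality $L=E(\alpha)=E[x]/(g)$ for some monic $g$ irreducible over $E$ of degree $n$, and by the key lemma $n=2^k$. If $k\ge 2$, Theorem~\ref{Thm:2adic} with the prime $2$ gives a finite extension $K/E$ over which $g$ acquires a monic quadratic factor, with $\nu_2([K:E])<k$; the key lemma makes $[K:E]$ a power of $2$, so $[K:E]\le 2^{k-1}<n$, while $[K:E]>1$ because $g$, being irreducible of degree $n>2$, has no quadratic factor over $E$ --- so $K/E$ is a proper finite extension of degree less than $n$, contradicting minimality. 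If $k=1$, then $g$ is an irreducible quadratic over $E$, and completing the square expresses its roots in terms of a square root of an element of $E$, which lies in $E$ by hypothesis~(1); so $g$ has a root in $E$, again a contradiction. Hence $E$ has no proper finite extension, which is the theorem; taking $F=\R$ recovers the classical fundamental theorem of algebra.
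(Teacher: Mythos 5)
Your plan --- reduce to $F$ via $f f^{\sigma}$, prove a ``key lemma'' that every finite extension of $F$ has $2$-power degree, then run a descending induction on $\nu_2$ using Theorem \ref{Thm:2adic} for $k\geq 2$ and square roots for $k=1$ --- is structurally sound, and the endgame you describe is essentially correct granted the key lemma. But the key lemma is exactly where all the content of hypothesis (2) must be spent, and you have not proved it; you say so yourself. The difficulty you identify is real and is not a routine patch: Theorem \ref{Thm:2adic} hands you an irreducible degree-$p$ polynomial over some extension $K$ with $[K:F]$ prime to $p$, and there is no general device (norms, transfer, induction on the prime-to-$p$ part) that converts this into a root-less polynomial of odd \emph{prime} degree over $F$ itself without essentially re-proving the theorem. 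A smaller but genuine gap sits earlier in the same argument: the inference ``minimality of $e$ forces $p\nmid[K:F]$'' presupposes that a finite extension whose degree is divisible by $p$ contains an element whose degree over $F$ is divisible by $p$; that requires the primitive element theorem plus separability bookkeeping, and it is in any case another instance of the same descent problem you flag as delicate.

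The paper's proof is engineered precisely to avoid this descent. Two moves do the work. First, when $F\neq F(\sqrt{-1})$, if some odd-degree polynomial over $F$ had no root one could multiply it by a power of $x^2+1$ (which also has no root) to reach an odd \emph{prime} degree, contradicting (2) outright; hence every irreducible over $F$ of degree $\geq 2$ has \emph{even} degree, and only the prime $2$ ever needs to be handled. Second, rather than trying to push an irreducible polynomial from $K$ back down to $F$, the paper takes the minimal polynomial over $F$ of each \emph{element} of the field $K$ supplied by Theorem \ref{Thm:2adic}: its degree divides $[K:F]$, so its $2$-adic valuation is smaller than the minimal one, and minimality forces it to be linear or quadratic; by (1) this embeds $K$ into $F(\sqrt{-1})$, where the quadratic factor of $f$ then lives and splits. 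In other words, the paper never needs your global key lemma, only the statement that the particular $K$ produced by Theorem \ref{Thm:2adic} sits inside $F(\sqrt{-1})$, and that weaker statement descends for free through degrees of elements. (Separately, note that your $k=1$ step via completing the square, and the reduction $ff^{\sigma}$, both need $\mathrm{char}\,F\neq 2$; the case $F=F(\sqrt{-1})$ requires its own argument, which the paper supplies via a numerical-semigroup analysis of the set of degrees of root-less polynomials.)
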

\begin{proof}
Fix some algebraic closure $\overline{F}\supseteq F(\sqrt{-1})$.  Assume, by way of contradiction, that there is an irreducible polynomial $f(x)\in F[x]$ with $n:=\deg(f(x))\geq 3$.  Subject to this assumption, choose $n$ such that $\nu_2(n)$ is minimal.  Condition (2) forces $\nu_2(n)\geq 1$.

By Theorem \ref{Thm:2adic}, fix an algebraic extension $K/F$ in $\overline{F}$ with $\nu_2([K:F])<\nu_2(n)$ where $f(x)$ has a degree $2$ factor over $K[x]$.  Let $\alpha\in K$ be arbitrary, and let $m_{\alpha,F}(x)\in F[x]$ be its minimal polynomial over $F$.  Since $m_{\alpha,F}(x)$ is irreducible, but $\deg(m_{\alpha,F}(x))$ divides $[K:F]$, the minimality condition on $\nu_2(n)$ forces $m(x)$ to be linear or quadratic.  In either case, by condition (1) its roots are in $F(\sqrt{-1})$.  As $\alpha\in K$ is arbitrary, $K\subseteq F(\sqrt{-1})$.

Therefore, $f(x)$ has a degree $2$ factor in $F(\sqrt{-1})[x]$.  Then, by condition (1), it has a root $\beta\in F(\sqrt{-1})$.  Let $m_{\beta,F}(x)$ be the minimal polynomial of $\beta$ over $F$, which is at most quadratic over $F$.  Since $m_{\beta,F}(x)\mid f(x)$, this means that $f(x)$ has a proper factor in $F[x]$, contradicting the irreducibility assumption on $f(x)$.

Thus, the only irreducible polynomials over $F$ are linear or quadratic.  By condition (1), all their roots are in $F(\sqrt{-1})$, so $F(\sqrt{-1})=\overline{F}$, making it algebraically closed.
\end{proof}

The equality $\overline{\R}=\C$ is now an immediate corollary.  The conditions of Theorem \ref{Thm:FullFTA} are met when $F=\R$, by Lemma \ref{Lemma:QuadOverC} and Proposition \ref{Prop:OddDegree}.

The converse of Theorem \ref{Thm:FullFTA} is also true.  Assuming $F(\sqrt{-1})$ is algebraically closed, then condition (1) is immediate, and condition (2) follows from the fact that $F(\sqrt{-1})$ has no nontrivial odd degree extensions together with $[F(\sqrt{-1}):F]\leq 2$.

As shown by Shipman \cite{Shipman}, condition (2) of Theorem \ref{Thm:FullFTA} can be replaced by
\begin{itemize}
\item[$(2)'$] each polynomial of odd prime degree over $F$ has a root in $F$.
\end{itemize}
Using our machinery, we find that even more is true.

\begin{thm}\label{Thm:Roots}
Over any field $F$, conditions $(2)$ and $(2)'$ are equivalent.
\end{thm}
\begin{proof}
$(2)\Rightarrow (2)'$:  By induction, each polynomial of odd degree has a root.

$(2)'\Rightarrow (2)$:  Let $S$ be the set of $n\in \Z_+$ such that some polynomial over $F$ of degree $n$ has no roots in $F$.  Since the product of two polynomials without roots also has no root, the set $S$ is closed under addition.  Working by way of contradiction, assume that $S$ contains an odd integer.

Let $m\geq 1$ be the GCD of all the elements in $S$.  If $m=1$, then by the classical coin problem (also called the postage stamp problem) $S$ must contain every sufficiently large integer, which contradicts property $(2)'$.  So $m\geq 2$.  Further, since $S$ contains an odd integer, then $2\notin S$, and so we will hereafter remove the word ``odd'' from condition $(2)'$.  Let $p$ be an integer prime dividing $m$.  Fix a nonlinear irreducible polynomial $f(x)\in F[x]$ such that $\nu_p(\deg(f(x)))$ is minimal (which must be at least $1$).

By Theorem \ref{Thm:2adic}, fix an algebraic extension $K/F$ where $f(x)$ has a degree $p$ factor in $K[x]$, with $\nu_{p}([K:F])<\nu_p(\deg(f(x)))$.  If $K=F$, then $f(x)$ has a degree $p$ factor in $F[x]$.  Hence, it has a root in $F$, contradicting the defining condition for $f(x)$.  Thus, $K\neq F$, and we may fix some $\alpha\in K\setminus F$.  Letting $m_{\alpha,F}(x)$ be its minimal polynomial, then $m_{\alpha,F}(x)$ has no root in $F$.  However, $\deg(m_{\alpha,F}(x))$ divides $[K:F]$, contradicting the $p$-adic minimality assumption on $\deg(f(x))$.
\end{proof}

\section{Final remarks and optimizations}

In this final section, we suggest possible improvements to Theorem \ref{Thm:FullFTA}, especially when we allow ourselves access to deeper theory.  These improvements come in three forms: simplifying the proof, strengthening the theorem's conclusion, and weakening (or optimizing in other ways) the assumed hypotheses (1) and (2).

To begin, we note that in the usual case when $F=\R$, Theorems \ref{Thm:DimensionFactor} and \ref{Thm:2adic} are only needed when $k=p=2$ and when $R=F$.  The proofs of these two theorems can be somewhat simplified in this situation.  On the other hand, by utilizing some non-elementary ring theory, Theorem \ref{Thm:DimensionFactor} may be improved to show that $R_{f,k}$ is a free $R$-module of rank $\binom{n}{k}$.  We do not include details as these improvements are orthogonal to the purposes of this paper.

An interesting aspect of our work is that, outside of Section \ref{Section:2} and this present section, we make no use of the characteristic of fields.  It appears that all previous generalizations of the FTA to arbitrary characteristic utilize facts about inseparability.  It is a pleasant surprise that our proofs of Theorems \ref{Thm:FullFTA} and \ref{Thm:Roots} are entirely characteristic-free.  Further, the dichotomy inherent in the cases $F=F(\sqrt{-1})$ and $F\neq F(\sqrt{-1})$ is entirely irrelevant.

A classic theorem of Artin and Schreier from 1927 says that $\overline{F}/F$ is a nontrivial finite extension if and only if $F$ is real closed.   Appealing to that theorem, we may improve Theorem \ref{Thm:FullFTA}, by generalizing its conclusion, as follows:

\begin{thm}
A field $F$ satisfies conditions $(1)$ and $(2)$ if and only if $\overline{F}/F$ is a finite extension, in which case $\overline{F}=F(\sqrt{-1})$.
\end{thm}

As it will be useful in the following discussion, we record here an important part of the theory built up by Artin and Schreier, restricted to degree $2$ extensions.

\begin{lemma}\label{Lemma:KeyLemma}
For any field $F$ containing $\sqrt{-1}$, if $F$ has a quadratic extension, then it has extensions of every power-of-$2$ degree.
\end{lemma}
\begin{proof}
When the extension is inseparable, this is immediate.  When the extension is separable and ${\rm char}(F)=2$, this follows from Lemma 3 on page 675 in \cite{Jacobson}.  Finally, when ${\rm char}(F)\neq 2$, the quadratic extension is of the form $F(\sqrt{a})$ for some $a\in F\setminus F^2$.  Then  for each $r\in \Z_+$ the polynomial $x^{2^r}-a$ is irreducible in $F[x]$ by \cite[Theorem 9.1]{Lang}.
\end{proof}

Recall that Theorem \ref{Thm:Roots} gives us the freedom to express the FTA either in terms of the existence of roots or in terms of reducibility.  Shipman thoroughly investigated the ``roots'' case in \cite{Shipman}.  We have nothing to add to that discussion, except to note that it would be an interesting project to generalize Shipman's results using Theorem \ref{Thm:2adic} if possible (since roots correspond to degree $1$ factors).

We now turn to optimizing assumptions (1) and (2) in terms of reducibility hypotheses.  To that end, for each integer $n\geq 2$, consider the condition
\[
C(n):\ \text{every polynomial of degree $n$ (over the given field) is reducible}.
\]
This is generally a much weaker hypothesis than assuming every polynomial of degree $n$ has a root (except when $n=2,3$), thus potentially leading to a stronger version of the FTA.  The condition $C(n)$ is equivalent to saying that the given field has no extension of degree $n$, by \cite[Theorem 1]{GS}.  (There is a minor oversight in the inseparable case of the proof, where $\theta$ may need to be replaced by $c\theta$, for some $c\in k\setminus k^p$.)  Hereafter, we will primarily think of $C(n)$ as a statement about the nonexistence of a degree $n$ extension.

The following proposition gives two possible refinements of condition $(1)$. Interestingly, the first condition does not isolate $F(\sqrt{-1})$ for special consideration.

\begin{prop}\label{Prop:1Equiv}
Let $F$ be a field.  Consider the properties
\begin{enumerate}
\item[$({\rm A}1)$] $F$ satisfies $C(2^r)$ for some $r\in \Z_{+}$ and
\item[$({\rm B}1)$] $F(\sqrt{-1})$ satisfies $C(2^r)$ for some $r\in \Z_{+}$.
\end{enumerate}
The following implications hold
\[
({\rm A}1)\Longrightarrow ({\rm B}1) \Longleftrightarrow (1).
\]
\end{prop}
\begin{proof}
$({\rm A}1)\Rightarrow ({\rm B}1)$:  Assume $C(2^r)$ holds over $F$, for some $r\in \Z_+$.  If $F=F(\sqrt{-1})$, then $C(2^r)$ also holds over $F(\sqrt{-1})$.  If $F\neq F(\sqrt{-1})$, then $r>1$ and $C(2^{r-1})$ holds over $F(\sqrt{-1})$.

$({\rm B}1)\Leftrightarrow (1)$:   If $(1)$ holds, then $({\rm B}1)$ holds for tautological reasons.  Assume $(1)$ fails, so $F(\sqrt{-1})$ has a quadratic extension.  By Lemma \ref{Lemma:KeyLemma}, $({\rm B}1)$ also fails.
\end{proof}

We should mention that the conditions of Proposition \ref{Prop:1Equiv} can be interpreted Galois-theoretically.  We leave that task to the interested reader, as well as the task of showing that the field of real constructible numbers (i.e., the subfield of $\R$ obtained by closing $\Q$ under square-roots of positive numbers) satisfies $({\rm B}1)$ but not $({\rm A}1)$.  While condition $({\rm A}1)$ is thus in general strictly stronger than $(1)$, it is nevertheless a necessary condition for $\overline{F}=F(\sqrt{-1})$ to hold, hence making it a natural replacement for $(1)$ if one wants to avoid mentioning the field $F(\sqrt{-1})$ specifically.  Moreover, there exists a field whose extension degrees are exactly the powers of $2$, and so $({\rm A}1)$ is a required assumption when stating the FTA only in terms of possible extension degrees for the \emph{base} field $F$.

We now turn to condition $(2)$, which says precisely that $C(n)$ holds over $F$ for each odd integer $n>1$.  This statement is not optimal, in the sense that only some of the odd numbers are needed.  Indeed, $(2)$ is equivalent to:
\begin{itemize}
\item[$({\rm A}2)$] $F$ satisfies $C(n)$ for all sufficiently large odd integers $n\gg 1$.
\end{itemize}
\begin{proof}[Proof sketch]
Assume there is a nontrivial odd degree extension $K/F$.  If it is inseparable, there are arbitrarily large such extensions.  In the other case, instead use \cite[Theorem 7]{GS} to get extensions with arbitrarily large odd part.  Then pass to a Galois closure, and then to the fixed field of a Sylow $2$-subgroup.
\end{proof}

This leaves open the possibility that the set of odd numbers used in condition $(2)$ could be optimized in other ways.  For example, perhaps we could assume $C(n)$ holds only when $n$ is an odd prime power.  However, such ruminations are complicated by the following:

\begin{example}\label{Example:A6}
Let $F_0$ be a field with some $\alpha\in \overline{F_0}$ such that $F_0(\alpha)/F_0$ is Galois with Galois group $A_6$.  (Such a field exists.)  Take $F$ be a maximal subfield of $\overline{F_0}$ such that $F(\alpha)/F$ continues to have Galois group $A_6$.  Given any finite, nontrivial extension $K/F$ in $\overline{F_0}$, then $K(\alpha)/K$ is not an $A_6$ extension from the maximality hypothesis on $F$.  Hence, $K\cap F(\alpha)\neq F$ by appealing to \cite[Proposition 19, p.\ 591]{DF}.  Therefore, $[K:F]$ is a multiple of $[L:F]$ for some field $L$ with $F\subsetneq L\subseteq F(\alpha)$.  We may as well take $L$ to be a minimal nontrivial extension of $F$ in $F(\alpha)$.  In that case $[L:F]$ is the index of a maximal subgroup of $A_6$.  The only such indexes are $6$, $10$, and $15$.  Consequently, any set of odd numbers used to limit condition (2) must contain a multiple of $15$.
\end{example}

In the previous example, $A_6$ can be replaced by any finite simple group.  Disappointingly, this means that to understand exactly which combinations of the $C(n)$'s are sufficient to guarantee $\overline{F}=F(\sqrt{-1})$, one must (at the very least) know the indexes of the maximal subgroups of such groups.  We might appeal to the classification of finite simple groups to start collecting such information, but then we are far removed from \emph{fundamental} mathematics.  Generalizing the FTA along these lines might make it a misnomer after all.

\bigskip

\noindent{\bf Author Contribution}: Katelyn S.\ Clark and Pace P.\ Nielsen contributed equally in all aspects of creating this material.
\bigskip

\noindent{\bf Funding}: This work was partially supported by a grant from the Simons Foundation (\#963435 to Pace P.\ Nielsen).
\bigskip

\noindent{\bf Acknowledgements}: We thank both Kyle Pratt for extensive comments on a previous draft and Alexander Lai De Oliveira for useful conversations on Theorem \ref{Thm:DimensionFactor}.  We also thank multiple anonymous reviewers for suggestions that improved this paper.

\providecommand{\MR}{\relax\ifhmode\unskip\space\fi MR }
\providecommand{\MRhref}[2]{%
  \href{http://www.ams.org/mathscinet-getitem?mr=#1}{#2}
}
\providecommand{\href}[2]{#2}

\end{document}